\def\Cur{\mathop{\fam 0 Cur}\nolimits}
\def\Vir{\mathop{\fam 0 Vir}\nolimits}
\def\gr{\mathop{\fam 0 gr}\nolimits}
\def\Conf{\mathop{\fam 0 Conf}\nolimits}
\def\ComConf{\mathop{\fam 0 ComConf}\nolimits}
\def\End{\mathop{\fam 0 End}\nolimits}
\def\Cend{\mathop {\fam0 Cend}\nolimits}
\def\oo#1{\mathrel {{\circ}_{(#1)}}}
\def\ool#1{\mathrel {{\circ}_{#1}}}
\def\bol#1{\mathrel {{\bullet}_{#1}}}
\newtheorem{theorem}{Theorem}
\newtheorem{proposition}{Proposition}
\newtheorem{corollary}{Corollary}
\theoremstyle{definition}
\newtheorem{definition}{Definition}
\newtheorem{example}{Example}
\newtheorem{remark}{Remark}
\title{Universal enveloping Poisson conformal algebras}
\author{P. S. Kolesnikov}
\address{Sobolev Institute of Mathematics}
\email{pavelsk@math.nsc.ru}
\subjclass[2010]{17B69, 17B63, 17-08, 16S30}
\begin{document}

\begin{abstract}
Lie conformal algebras are useful tools for studying vertex operator algebras and their representations. 
In this paper, we establish close relations between Poisson conformal algebras and 
representations of Lie conformal algebras. We also calculate explicitly Poisson conformal brackets 
on the associated graded conformal algebras of universal associative conformal envelopes of Virasoro 
conformal algebra and Neveu--Schwartz conformal superalgebra. 

Keywords:
 Conformal algebra, Poisson algebra, Gr\"obner--Shirshov basis.
\end{abstract}

\maketitle
 
\section{Introduction}

This work was inspired by the following observation. 
Suppose $V$ is a Poisson algebra with operations $x\cdot y$ 
and $[x,y]$ over a field $\Bbbk $. Denote by $\mathfrak g$ the underlying Lie 
algebra structure on $V$ relative to the operation $[\cdot,\cdot]$. 
For a formal variable $\lambda$, consider the following operation $(\cdot \ool\lambda \cdot)$:
\[
 (x\ool{\lambda } u) = [x,u] + \lambda x\cdot u, \quad x,u\in V.
\]
It is straightforward to compute that 
\[
 x\ool{\lambda} (y\ool{\mu }u) - y\ool{\mu} (x\ool{\lambda } u) = [x,y]\ool{\lambda +\mu} u
\]
(see Proposition~\ref{prop:Poisson-LieMod} below for more general computation).
The relation obtained is known as the conformal Jacobi identity \cite{K1} for a conformal module over 
the current Lie conformal algebra $\Cur\mathfrak g$.

In this paper, we study conformal Poisson algebras. They turn to be closely related to
representations of Lie conformal algebras as well as to Gel'fand--Dorfman structures introduced 
in \cite{GD1983}. The latter are known to be in one-to-one correspondence with certain class 
of Lie conformal algebras \cite{Xu1999}. A series of examples of Poisson conformal algebras 
is given by associated graded conformal algebras $\gr U$ of universal associative conformal envelopes of Lie conformal algebras
corresponding to various locality bounds.
We establish 
explicit expressions for the conformal Poisson brackets on $\gr U$ for universal envelopes 
of the Virasoro conformal algebra $\Vir $ for $N=2,3$.
An interesting intermediate example appears as the even part of a universal associative envelope 
of the Neveu--Schwartz conformal superalgebra~$K_1$.

A universal and effective tool for investigations related to universal envelopes 
is the Gr\"obner--Shirshov bases (GSB) theory. 
In Section \ref{sec:GSB}, we present an approach to the calculation of GSBs for associative conformal algebras
based on the GSB theory for modules over ordinary associative algebras. 
Section \ref{sec:exmp} contains two examples: we compute GSBs for two particular universal envelopes 
for the Virasoro conformal algebra and Neveu--Schwartz conformal superalgebra.
As an application, we calculate explicitly the structure of three Poisson conformal envelopes 
$PV_2$, $PV_3$, and $PK_{10}$ 
of the Virasoro conformal algebra in Section~\ref{sec:Brackets}.

\section{Conformal algebras: preliminaries}

In this section, we state definitions and examples of conformal algebras following \cite{K1}.
Throughout the paper, $\Bbbk $ is a field of characteristic zero,
$H=\Bbbk [\partial ]$ is the algebra of polynomials,
$\mathbb Z_+$ 
is the set of non-negative integers. We will use common notation $x^{(s)}$ for $\frac{1}{s!} x^s$, $s\in \mathbb  Z_+$.

A {\em Lie conformal algebra}  $L$ is an $H$-module equipped with 
a family of bilinear operations 
$[\cdot \oo{n} \cdot ]$, $n\in \mathbb Z_+$, such that 
for every $x,y\in L$
\begin{equation}\label{eq:Locality}
 [x\ool\lambda y]:=\sum\limits_{s\ge 0} \lambda ^{(s)} [x\oo{s} y] \in L[\lambda ] 
\end{equation}
where $ L[\lambda ]$ stands for the space  of polynomials over $L$
in a formal variable $\lambda  $,
\begin{equation}
\begin{gathered}\label{eq:3/2-linear}
 [\partial x \oo{n} y ] = -n [x\oo{n-1} y], \\
 [x\oo{n} \partial y] = \partial [x\oo{n} y] + n[x\oo{n-1} y],
\end{gathered}
\end{equation}
\begin{equation}\label{eq:ConfAComm}
 [x\oo{n} y ] = -\sum\limits_{s\ge 0} (-1)^{n+s} \partial ^{(s)} [y\oo{n+s} x]
\end{equation}
for all $x,y \in L$, $n\in \mathbb Z_+$, 
and
\begin{equation}\label{eq:ConfJacobi}
 [x\oo{n} [y\oo{m} z]] - [y\oo{m} [x\oo{n} z]] = \sum\limits_{s\ge 0} \binom {n}{s} [[x\oo{s} y]\oo{n+m+s} z]
\end{equation}
for all $x,y,z\in L$, $n,m\in \mathbb Z_+$.

Condition \eqref{eq:Locality} states that for every pair $x,y\in L$ there exist only a finite number 
of $n\in \mathbb Z_+$ such that  $[x\oo{n} y] \ne 0$. In particular, one may determine {\em locality function}
$N_L:L\times L\to \mathbb Z_+$ in the following way: $N(x,y)$ is the minimal $n\in \mathbb Z_+$ such that 
$[x\oo{m} y]=0$ for all $m\ge n$.

An {\em associative conformal algebra} $C$ is an $H$-module  equipped with 
a series of bilinear operations $(\cdot \oo{n} \cdot )$, $n\in \mathbb Z_+$, 
such that the analogues of \eqref{eq:Locality}, \eqref{eq:3/2-linear} hold and
\begin{equation}\label{eq:ConfAss}
 (x\oo{n} (y\oo{m} z)) = \sum\limits_{s\ge 0} \binom {n}{s} ((x\oo{s} y)\oo{n+m+s} z)
\end{equation}
for all $x,y,z \in C$, $n,m\in \mathbb Z_+$.

It is convenient to write the axioms of Lie and associative conformal algebras in terms 
of generating functions ($\lambda$-products) given by the expression \eqref{eq:Locality}. 
For example, \eqref{eq:3/2-linear} is equivalent to 
\begin{equation}\label{eq:3/2-linearLambda}
 [\partial x\ool{\lambda } y] = -\lambda [x\ool{\lambda } y],
 \quad 
 [x\ool{\lambda } \partial y] = (\lambda +\partial ) [x\ool{\lambda } y],
\end{equation}
\eqref{eq:ConfJacobi} and \eqref{eq:ConfAss} are equivalent to 
\begin{equation}\label{eq:ConfJacobiLambda}
 [x\ool{\lambda } [y\ool{\mu } z]] - [y\ool{\mu}[x\ool{\lambda }z]] = [[x\ool{\lambda }y]\ool{\lambda + \mu }z]
\end{equation}
and 
\begin{equation}\label{eq:ConfAssLambda}
 (x\ool{\lambda } (y\ool{\mu } z)) = ((x\ool{\lambda }y)\ool{\lambda + \mu }z),
\end{equation}
respectively,
where $\lambda $ and $\mu $ are independent commuting variables.

The expression in the right-hand side of \eqref{eq:ConfAComm} is equal to the coefficient at $\lambda^{(n)}$ 
in the expression $[y\ool{-\partial -\lambda } x]$. 
Therefore, \eqref{eq:ConfAComm} is equivalent to 
\[
 [x\ool{\lambda } y] = - [y\ool{-\partial - \lambda } x].
\]

\begin{example}\label{exm:Current}
Let $A$ be a Lie (associative) algebra.
Consider the free $H$-module $C=H\otimes A$.
Define 
\[
 [a\ool{\lambda }b] = [a,b]
\]
for $a,b\in A$, and expand the operation $[\cdot\ool\lambda \cdot]$ to the entire $C$
by \eqref{eq:3/2-linearLambda}.
We obtain Lie (associative) conformal algebra structure called
{\em current} conformal algebra; it is denoted  by $\Cur A$.
\end{example}

\begin{example}\label{exm:Virasoro}
Consider 
1-generated free $H$-module $\Vir=Hv$.
Define 
\begin{equation}\label{eq:VirProduct}
 [v\ool{\lambda }v] = (\partial + 2\lambda )v
\end{equation}
and expand the operation $[\cdot\ool\lambda \cdot]$ to the entire $H$-module
by \eqref{eq:3/2-linearLambda}.
We obtain Lie conformal algebra structure called {\em Virasoro} conformal algebra.
\end{example}

A general class of examples of Lie conformal algebras (quadratic conformal algebras) involving 
current and Virasoro conformal algebras is mentioned in Section \ref{sec:ConfPoisson}, see also \cite{Xu1999}.

\begin{example}\label{exm:Cend}
 Let $A$ be an associative algebra.
 Then the $H$-module
 $C= \Bbbk[\partial,x ] \otimes A\simeq H\otimes A[x]$ equipped with 
 operations
 \[
  (f(\partial, x)\ool{\lambda } g(\partial , x)) = f(-\lambda , x)g(\partial+\lambda, x+\lambda ),\quad f,g\in C,
 \]
 is an associative conformal algebra. If $A=M_n(\Bbbk )$ then the algebra constructed in this way 
 is denoted $\Cend_n$ \cite{K1}. 
\end{example}

As in the world of ordinary algebras, an associative conformal algebra $C$ turns into a Lie one (denoted by $C^{(-)}$)
relative to new operations 
\begin{equation}\label{eq:ConfCommutator}
 [x\oo{n} y] = (x\oo{n} y) - \{y\oo{n} x\}, \quad n\in \mathbb Z_+,
\end{equation}
where 
\begin{equation}\label{eq:Curly}
\{y\oo{n} x\} = \sum\limits_{s\ge 0} (-1)^{n+s} \partial ^{(s)} (y\oo{n+s} x). 
\end{equation}
However, 
there exist Lie conformal algebras that cannot be embedded into associative ones in this way \cite{Roit2000}.

Suppose $V$ is an $H$-module. A {\em conformal linear transformation} $\varphi $ is a rule that turns every $v\in V$ 
into a polynomial $\varphi\ool\lambda v\in V[\lambda ]$ in such a way that 
$\varphi \ool\lambda \partial v = (\partial +\lambda)(\varphi\ool\lambda v)$.
The set of all conformal linear transformations of $V$ is denoted $\Cend V$. The space $\Cend V$ has a natural 
structure of an $H$-module, and there is a $\lambda $-product 
$(\varphi \ool\lambda \psi )\in \Cend V[[\lambda ]]$ given by the rule
\[
 (\varphi \ool\lambda \psi )\ool\mu v = \varphi \ool\lambda (\psi \ool{\mu-\lambda}  v). 
\]
If $V$ is a finitely generated $H$-module then $(\varphi \ool\lambda \psi )$ is a polynomial in $\lambda $ and thus 
$\Cend V$ is an associative conformal algebra. For a free $H$-module $V$ of rank $n$ $\Cend V$ is isomorphic to $\Cend_n$
from Example \ref{exm:Cend}.

Assume $C$ is an associative conformal algebra. 
 An $H$-module $V$ is said to be a {\em conformal module} 
over $C$ if equipped with an $H$-linear map $\rho : C\to \Cend V$ 
preserving the $\lambda$-product. Alternatively, there should exist a
family of bilinear maps $(\cdot \oo{n}\cdot ): C\times V \to V$, $n\in \mathbb Z_+$, 
such that the analogues of \eqref{eq:Locality}, \eqref{eq:3/2-linear}, and \eqref{eq:ConfAss} hold. 
In a similar way, a conformal representation of a Lie conformal algebra $L$ is defined 
as an $H$-linear map $\rho: L\to (\Cend V)^{(-)}$ preserving the operation $[\cdot \ool\lambda \cdot ]$.

Conformal algebra (or a module over a conformal algebra) is said to be {\em finite} if it is finitely generated as an $H$-module. 
There is an open problem whether every finite Lie conformal algebra may be embedded into an associative one.
In \cite{Kol2016}, it was shown that if a finite Lie conformal algebra $L$ 
is a torsion-free $H$-module and satisfies Levi condition (i.e., if its solvable radical splits)
then $L$ has a finite faithful representation, thus may be embedded into an associative conformal algebra $\Cend_n$ for 
an appropriate~$n$.

\section{Poisson conformal algebras and universal enveloping conformal algebras}\label{sec:ConfPoisson}

A more conceptual and general approach to the theory of conformal algebras was proposed in \cite{BDK2001}. 
Consider $H$ as a Hopf algebra generated by primitive element $\partial $. Then the class $\mathcal M^*(H)$ of 
$H$-modules is a pseudo-tensor category in the sense of \cite{BD} relative to a natural composition rule. 
Then Lie (or associative) conformal algebra may be defined as a morphism from the operad Lie (or As) to 
$\mathcal M^*(H)$. Generator of the corresponding operad maps to an $H$-bilinear product ({\em pseudo-product})
\[
 *: C\otimes C \to (H\otimes H)\otimes _H C,\quad C\in \mathcal M^*(H), 
\]
where 
\[
 x*y = \sum\limits_{s\ge 0} ((-\partial)^{(s)}\otimes 1)\otimes _H (x\oo{s} y),\quad x,y\in C.
\]
Associativity, (anti-)commutativity, and Jacobi identity for conformal algebras turn into 
very natural expressions in terms of the pseudo-product (see \cite{BDK2001}). For an arbitrary 
variety Var of algebras, this approach leads to the notion of a Var-conformal algebra~\cite{Kol2006CA}.
In particular, for the variety of Poisson algebras, we obtain the following

\begin{definition}\label{defn:PoissonConformal}
A {\em Poisson conformal algebra} $P$ is an $H$-module equipped with two 
$\lambda$-products 
\[
 (x\ool\lambda y), [x\ool\lambda y] \in P[\lambda ], \quad x,y\in P, 
\]
such that 
\eqref{eq:3/2-linear} holds for both $\lambda$-products, 
$(x\ool{\lambda }y)$ is associative and commutative, 
$[x\ool{\lambda} y]$ is anti-commutative and satisfies the Jacobi identity \eqref{eq:ConfJacobi}, 
and the following {\em conformal Leibniz rule} holds:
\begin{equation}\label{eq:ConfLeibniz}
 [x\ool{\lambda } (y\ool{\mu } z) ] = ([x\ool{\lambda } y]\ool{\lambda + \mu }z )  + (y\ool{\mu } [x\ool{\lambda } z]),
 \quad x,y,z\in P.
\end{equation}
\end{definition}

\begin{remark}\label{rem:Leibniz2}
Relation \eqref{eq:ConfLeibniz} is equivalent to 
\begin{equation}\label{eq:ConfLeibniz2}
 [(x\ool{\lambda } y)\ool{\mu } z ] = (y\ool{\mu -\lambda }[x\ool{\lambda } z] )  + (x\ool{\lambda } [y\ool{\mu-\lambda } z]).
\end{equation}
\end{remark}

\begin{remark}\label{rem:Leibniz1-1}
 Note that \eqref{eq:ConfLeibniz} holds on every associative conformal algebra $C$ relative to 
 $[x\ool{\lambda }y]$ given by  \eqref{eq:ConfCommutator}:
 \[
  [x\ool{\lambda }y] = (x\ool\lambda y) - (y\ool{-\partial-\lambda } x), \quad x,y\in C.
 \]
An equivalent form of \eqref{eq:ConfLeibniz2} in the absence of commutativity is 
\begin{equation}\label{eq:ConfLeibniz2As}
 [(x\ool{\lambda } y)\ool\mu z] = (x\ool\lambda [y\ool{\mu-\lambda } z]) + \{[x\ool\lambda z]\ool{\mu-\lambda} y\}.
\end{equation}
\end{remark}

Definition \ref{defn:PoissonConformal} seems close to the notion of a Poisson vertex algebra introduced in 
\cite{BDSK09}. However, it is not clear what is a formal relation between them.

\begin{example}\label{exmp:CurrP}
Let $V$ be an ordinary Poisson algebra. Then 
$P=H\otimes V$ equipped with operations
$(a\ool{\lambda }b) = ab$, $[a\ool{\lambda }b] = [a,b]$
for
$a,b\in V$
is a Poisson conformal algebra denoted $\Cur V$.
\end{example}

\begin{example}\label{exmp:WeylPoiss}
 Consider $PV_2=\Bbbk[\partial , v]\simeq H\otimes \Bbbk[v]$
 as a current associative commutative conformal algebra over $\Bbbk[v]$
 equipped with 
 $$
 [v^m\ool{\lambda } v^{n}] = (m\partial + (n+m)\lambda )v^{n+m-1}.
 $$
It is straightforward to check that $P$ is a Poisson conformal algebra.  
\end{example}

Example \ref{exmp:WeylPoiss} (as a Lie conformal algebra, it is a sort of 
Block-type Lie conformal algebra studied in \cite{SuBlock}) 
is a particular case of a more general structure.

\begin{proposition}\label{prop:PoisDerStructure}
 Given a Poisson algebra $V$ with a derivation $D$, the free $H$-module $P=H\otimes V$ 
 is a Poisson conformal algebra relative to the following $\lambda$-products:
 \[
  \begin{gathered}
    (x\ool\lambda y) = xy, \\
    [x\ool{\lambda } y ] = [x,y] + \partial (yD(x)) + \lambda D(xy),
  \end{gathered}
 \]
$x,y\in V$. 
\end{proposition}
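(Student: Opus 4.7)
The plan is to verify each axiom of Definition~\ref{defn:PoissonConformal} in turn for $P = H\otimes V$ with the given $\lambda$-products. For the symmetric product $(x\ool\lambda y) = xy$, nothing really needs to be checked beyond the observation that this is the current commutative associative structure $\Cur V$ of Example~\ref{exm:Current}: the product is $\lambda$-independent, so associativity and commutativity in $V$ pass immediately via~\eqref{eq:3/2-linearLambda}. For anti-commutativity of $[x\ool\lambda y]$, direct expansion yields
\[
 -[y\ool{-\partial-\lambda} x] = [x,y] - \partial(xD(y)) + (\partial+\lambda)D(xy),
\]
and the equality with $[x\ool\lambda y]$ collapses to the Leibniz rule $D(xy) = xD(y) + yD(x)$.

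For the conformal Leibniz rule~\eqref{eq:ConfLeibniz}, I would expand $([x\ool\lambda y]\ool{\lambda+\mu} z) + (y\ool\mu [x\ool\lambda z])$ using the $(3/2)$-linearity rules and the $\lambda$-independence of the symmetric product. Grouping by monomials in $\lambda, \mu, \partial$: the $\mu$-terms cancel by commutativity of $V$; the $\lambda$-coefficient telescopes to $D(xyz)$ by iterating the derivation rule for $D$ on the commutative product; and the remaining $\lambda,\mu$-free part assembles to $[x,yz] + \partial(yzD(x))$ via the Poisson-Leibniz identity $[x,yz] = [x,y]z + y[x,z]$ in $V$. This equals $[x\ool\lambda (y\ool\mu z)] = [x\ool\lambda yz]$, proving~\eqref{eq:ConfLeibniz}.

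The substantive step is the conformal Jacobi identity~\eqref{eq:ConfJacobiLambda}. Each of the three triple brackets in the Jacobiator expands, via $(3/2)$-linearity, to a polynomial in $\lambda, \mu, \partial$ with coefficients in $V$. Matching the coefficients of monomials in $\lambda, \mu, \partial$ reduces the whole identity to a finite list of identities in $V$, each of which follows from one or more of three ingredients: the Jacobi identity in the Lie algebra $(V, [\cdot,\cdot])$; the Poisson-Leibniz identity; and the fact that $D$ is a derivation of both operations on $V$. The main obstacle is pure bookkeeping, since on the order of ten coefficient identities arise. To keep it manageable, I would organize the verification by total $(\lambda,\mu)$-degree, verify the pure Lie-theoretic piece at $\lambda=\mu=\partial = 0$ first, and then exploit the built-in antisymmetry of the Jacobiator under $(x,\lambda)\leftrightarrow(y,\mu)$ to halve the remaining casework.
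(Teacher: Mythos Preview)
Your proposal is correct, but it takes a more laborious route than the paper. The paper's proof observes that the Lie conformal bracket $[x\ool\lambda y] = [x,y] + \partial(yD(x)) + \lambda D(xy)$ is precisely an instance of the quadratic Lie conformal algebras classified by Xu~\cite{Xu1999} (equivalently, the Gel'fand--Dorfman construction), so anti-commutativity and the conformal Jacobi identity are already established there; the only thing left to check is the conformal Leibniz rule~\eqref{eq:ConfLeibniz}, which the paper dismisses as straightforward. You instead verify all axioms from scratch, including the Jacobi identity by brute coefficient matching. Both approaches are valid: yours is self-contained and makes explicit which identities in $V$ drive each axiom (Poisson--Leibniz, Jacobi in $V$, $D$ a derivation of both operations), at the cost of the bookkeeping you acknowledge; the paper's is much shorter and situates the Lie part within an existing framework, but requires the reader to know or look up Xu's result. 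If you want to match the paper's economy, you could cite~\cite{Xu1999} for the Lie conformal structure and keep only your Leibniz verification, which is already complete.
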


\begin{proof}
Conformal Lie bracket $[\cdot \ool\lambda \cdot ]$ turns $P$ into a quadratic Lie conformal algebra
studied in \cite{Xu1999}. 
 It remains to check \eqref{eq:ConfLeibniz} or \eqref{eq:ConfLeibniz2} which is straightforward.
\end{proof}

Relation between differential Poisson algebras and conformal algebras leads to a curious 
structure of an ordinary Poisson algebra on the space of Laurent polynomials over a Poisson algebra.

\begin{corollary}\label{cor:CoeffXu}
Suppose $V$ is a Poisson algebra with a derivation $D$, $V[t,t^{-1}]$ is the commutative algebra of Laurent polynomials 
over $V$. Then
\begin{equation}\label{eq:PoissonCoeff}
 [at^n,bt^m] = [a,b]t^{n+m}+(na D(b) - m bD(a))t^{n+m-1}, \quad a,b\in V,
\end{equation}
is a Poisson bracket on $V[t,t^{-1}]$.
\end{corollary}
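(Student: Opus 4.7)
The natural approach is to recognize that \eqref{eq:PoissonCoeff} is precisely the bracket induced on the coefficient (affinization) algebra of the Poisson conformal algebra $P = H \otimes V$ provided by Proposition \ref{prop:PoisDerStructure}. Since $P$ is a free $H$-module on $V$, the coefficient space $P\otimes \Bbbk[t,t^{-1}]/(\partial \otimes 1 + 1\otimes \partial_t)$ is canonically identified with $V[t,t^{-1}]$ via $x\otimes t^n \mapsto xt^n$ for $x\in V$, and under this identification $\partial x \otimes t^n = -n\, x\otimes t^{n-1}$.

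The first step is to read off the $n$-products from Proposition \ref{prop:PoisDerStructure}. The commutative $\lambda$-product has $(x\oo{0} y) = xy$ only, so the induced associative product on the coefficient algebra is the ordinary polynomial product $(xt^n)(yt^m) = xy\,t^{n+m}$ on $V[t,t^{-1}]$. For the Lie $\lambda$-bracket we find $[x\oo{0} y] = [x,y] + \partial(yD(x))$, $[x\oo{1} y] = D(xy)$, and $[x\oo{s} y] = 0$ for $s\ge 2$. Applying the standard coefficient formula
\[
 [xt^n, yt^m] = \sum_{s\ge 0} \binom{n}{s} [x\oo{s} y]\,t^{n+m-s}
\]
yields
\[
 [xt^n, yt^m] = [x,y]\,t^{n+m} - (n+m)\,yD(x)\,t^{n+m-1} + n\,D(xy)\,t^{n+m-1},
\]
and expanding $n D(xy) = n D(x) y + n x D(y)$ together with commutativity of $V$ gives exactly \eqref{eq:PoissonCoeff}.

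For the second step, the Jacobi identity for $[\cdot,\cdot]$ on $V[t,t^{-1}]$ and its Leibniz rule with respect to the ordinary product follow from the conformal Jacobi identity \eqref{eq:ConfJacobi} and the conformal Leibniz rule \eqref{eq:ConfLeibniz} satisfied by $P$, by applying the coefficient formula to both sides and collecting powers of $t$; anti-commutativity follows directly from $[x\ool{\lambda} y] = -[y\ool{-\partial-\lambda} x]$ under the substitution $\partial \mapsto -\partial_t$. The main obstacle is bookkeeping: one must verify that the sums of binomial coefficients produced by passing conformal identities through the coefficient formula really assemble into the classical identities, but this is a routine and well-known calculation which moreover can be avoided entirely here, since the explicit and elementary formula \eqref{eq:PoissonCoeff} admits a direct verification of Jacobi and Leibniz using only the derivation property of $D$ and the Poisson axioms of $V$.
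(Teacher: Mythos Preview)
Your proposal is correct and follows exactly the approach of the paper: the bracket \eqref{eq:PoissonCoeff} is identified as the coefficient (affinization) algebra structure on the Poisson conformal algebra $P=H\otimes V$ of Proposition~\ref{prop:PoisDerStructure}. The paper states this in a single sentence, whereas you spell out the $n$-products, the coefficient formula, and the passage from conformal identities to ordinary ones; these details are accurate and constitute a fuller version of the same argument.
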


\begin{proof}
Relation \eqref{eq:PoissonCoeff} 
along with the ordinary commutative multiplication on $V[t,t^{-1}]$
represent the coefficient algebra structure on the Poisson conformal algebra 
$P=H\otimes V$ from Proposition~\ref{prop:PoisDerStructure}.
\end{proof}

Poisson conformal algebras, even the simplest ones from example \ref{exmp:CurrP}, have a 
natural relation to representations of Lie conformal algebras. 

\begin{proposition}\label{prop:Poisson-LieMod}
 Let $P$ be a Poisson conformal algebra.
 Suppose $L$ is a conformal subalgebra of the underlying Lie conformal algebra $P$ relative to $[\cdot\oo\lambda \cdot]$.
 Then $P$ is a conformal module over $L$ with respect to the following operation:
 \[
  \langle a\ool\lambda u \rangle = [a\ool\lambda u] + \lambda (a\ool\lambda u), \quad a\in L,\ u\in P.
 \]
\end{proposition}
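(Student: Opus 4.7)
The plan is to verify the two axioms of a conformal module: sesquilinearity, and the module Jacobi identity. Sesquilinearity is immediate: using \eqref{eq:3/2-linearLambda} applied to each of the two $\lambda$-products of $P$,
\[
\langle \partial a\ool\lambda u\rangle = [\partial a\ool\lambda u] + \lambda(\partial a\ool\lambda u) = -\lambda\bigl([a\ool\lambda u]+\lambda(a\ool\lambda u)\bigr) = -\lambda\langle a\ool\lambda u\rangle,
\]
and similarly $\langle a\ool\lambda \partial u\rangle = (\partial+\lambda)\langle a\ool\lambda u\rangle$. Locality is also clear since both component $\lambda$-products are polynomial in $\lambda$.

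The main work is the module Jacobi identity
\[
\langle a\ool\lambda\langle b\ool\mu u\rangle\rangle - \langle b\ool\mu\langle a\ool\lambda u\rangle\rangle = \langle[a\ool\lambda b]\ool{\lambda+\mu} u\rangle,\qquad a,b\in L,\ u\in P.
\]
Expanding each $\langle\cdot\ool\lambda\cdot\rangle$ by its definition splits the left hand side into four groups of terms, each of the form (scalar in $\lambda,\mu$)$\times$(some $\lambda$-product of $a,b,u$). I would then process these groups in parallel, as follows. The ``$[\,[\,]\,]$'' terms $[a\ool\lambda[b\ool\mu u]]-[b\ool\mu[a\ool\lambda u]]$ collapse to $[[a\ool\lambda b]\ool{\lambda+\mu} u]$ by the Jacobi identity \eqref{eq:ConfJacobiLambda} for the Lie $\lambda$-product of $P$. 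The mixed terms $\mu[a\ool\lambda(b\ool\mu u)]+\lambda(a\ool\lambda[b\ool\mu u])$ and their $a\leftrightarrow b$, $\lambda\leftrightarrow\mu$ counterparts are rewritten by the conformal Leibniz rule \eqref{eq:ConfLeibniz}; the ``adjoint'' pieces $\mu(b\ool\mu[a\ool\lambda u])$ and $\lambda(a\ool\lambda[b\ool\mu u])$ cancel pairwise, leaving only $\mu([a\ool\lambda b]\ool{\lambda+\mu} u)-\lambda([b\ool\mu a]\ool{\lambda+\mu} u)$. Finally, the ``double associative'' terms $\lambda\mu\bigl((a\ool\lambda(b\ool\mu u))-(b\ool\mu(a\ool\lambda u))\bigr)$ vanish because the associative $\lambda$-product of $P$ is both associative and commutative, so one may check directly that $(a\ool\lambda(b\ool\mu u))=(b\ool\mu(a\ool\lambda u))$ via $(b\ool\mu a)=(a\ool{-\partial-\mu}b)$ together with the computation $\sum_s(-\partial-\mu)^{(s)}(\cdot)\ool{\lambda+\mu}u=\sum_s\lambda^{(s)}(\cdot\ool{\lambda+\mu}u)$, which is an instance of sesquilinearity.

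It remains to recognize $\mu([a\ool\lambda b]\ool{\lambda+\mu}u)-\lambda([b\ool\mu a]\ool{\lambda+\mu}u)$ as $(\lambda+\mu)([a\ool\lambda b]\ool{\lambda+\mu}u)$. This uses the anti-commutativity identity $[b\ool\mu a]=-[a\ool{-\partial-\mu}b]$ together with the same sesquilinearity manipulation as in the previous step, now applied to the outer associative product: one computes
\[
([b\ool\mu a]\ool{\lambda+\mu}u) = -([a\ool{-\partial-\mu}b]\ool{\lambda+\mu}u) = -\sum_{s\ge 0}\lambda^{(s)}([a\oo s b]\ool{\lambda+\mu}u) = -([a\ool\lambda b]\ool{\lambda+\mu}u).
\]
Substituting gives exactly $(\lambda+\mu)([a\ool\lambda b]\ool{\lambda+\mu}u)$, which combined with $[[a\ool\lambda b]\ool{\lambda+\mu}u]$ from the first group produces $\langle[a\ool\lambda b]\ool{\lambda+\mu}u\rangle$, as required.

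I expect the only real obstacle to be the bookkeeping around the step where one converts $[b\ool\mu a]$ to $[a\ool{-\partial-\mu}b]$ (and analogously for the associative product): pulling the $\partial$ out through the outer $\ool{\lambda+\mu}u$ slot and resumming the binomial coefficients must be done carefully, but in each case the telescoping $\sum_k\binom{s}{k}(\lambda+\mu)^k(-\mu)^{s-k}=\lambda^s$ does the job cleanly.
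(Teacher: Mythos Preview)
Your proof is correct and follows essentially the same route as the paper: expand $\langle a\ool\lambda\langle b\ool\mu u\rangle\rangle$ by definition, use the conformal Jacobi identity for the pure Lie terms, the Leibniz rule \eqref{eq:ConfLeibniz} for the mixed terms, commutativity of the associative product to kill the $\lambda\mu$ terms, and anti-commutativity plus sesquilinearity to turn $([b\ool\mu a]\ool{\lambda+\mu}u)$ into $-([a\ool\lambda b]\ool{\lambda+\mu}u)$. The paper simply cites \cite{Roit2000} for the identity $(a\ool\lambda(b\ool\mu u))=(b\ool\mu(a\ool\lambda u))$ and writes the anti-commutativity step in one line, whereas you spell out the underlying resummation; but the argument is the same.
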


\begin{proof}
 It remains to check the conformal Jacobi identity
\begin{equation}\label{eq:PoisJacobiMod}
\langle a\ool\lambda \langle b\ool\mu u \rangle \rangle - \langle b\ool\mu \langle a\ool\lambda u \rangle \rangle
= \langle [a\ool{\lambda } b]\ool{\lambda+\mu} u\rangle .
 \end{equation}
Indeed, 
\begin{multline}\nonumber
 \langle a\ool\lambda \langle b\ool\mu u \rangle \rangle
 =
 \langle a\ool\lambda [b\ool\mu u] +\mu (b\ool{\mu} u)  \rangle  \\
 =
 [ a\ool\lambda [b\ool\mu u]] +\mu [ a\ool\lambda (b\ool{\mu} u)]
  +
 \lambda (a\ool\lambda [b\ool\mu u]) +\lambda\mu (a\ool\lambda (b\ool{\mu} u)) \\
 =
 [ a\ool\lambda [b\ool\mu u]] 
 +\mu ([ a\ool\lambda b]\ool{\lambda + \mu} u) 
 + \mu (b\ool\mu [a\ool{\lambda } u])         \\
 +\lambda (a\ool\lambda [b\ool\mu u]) 
 +\lambda\mu (a\ool\lambda (b\ool{\mu} u)).
\end{multline}
Hence, the left-hand side of \eqref{eq:PoisJacobiMod} is equal to 
\begin{equation}\label{eq:PoisJacobiMod_LHS}
  [[a\ool{\lambda }b ]\ool{\lambda+\mu} u]
 + (\lambda +\mu)([ a\ool\lambda b]\ool{\lambda + \mu} u)
\end{equation}
since 
\[
 (a\ool\lambda (b\ool{\mu} u)) = (b\ool\mu (a\ool{\lambda } u))
\]
in every associative and commutative conformal algebra \cite{Roit2000},
\[
 ([ b\ool\mu a]\ool{\lambda + \mu} u) = - ([ a\ool{-\mu-\partial} b]\ool{\lambda + \mu} u) = 
 - ([ a\ool\lambda b]\ool{\lambda + \mu} u)
\]
by \eqref{eq:ConfAComm} and \eqref{eq:3/2-linear}.
Obviously, 
\eqref{eq:PoisJacobiMod_LHS} coincides with the right-hand side of 
\eqref{eq:PoisJacobiMod}.
\end{proof}

\begin{corollary}\label{cor:PoissonCurrent}
 If $V$ is an ordinary Poisson algebra then 
 $P=\Cur V$ is a conformal module over $\Cur \mathfrak g$,
 where $\mathfrak g$ is a Lie subalgebra of $P$.
\end{corollary}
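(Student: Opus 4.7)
The plan is to derive the corollary directly from Proposition~\ref{prop:Poisson-LieMod}. First, I would make explicit the identification of the underlying Lie conformal algebra of $P=\Cur V$ with $\Cur\mathfrak g$. By Example~\ref{exmp:CurrP}, the Lie $\lambda$-bracket on $P=H\otimes V$ is determined by $[a\ool\lambda b]=[a,b]$ for $a,b\in V$, extended to all of $H\otimes V$ through the sesquilinearity relations \eqref{eq:3/2-linearLambda}. Comparing with Example~\ref{exm:Current}, this is precisely the definition of $\Cur\mathfrak g$, where $\mathfrak g$ denotes $V$ equipped with its Poisson Lie bracket. In particular, the image of $\mathfrak g$ under $a\mapsto 1\otimes a$ is an (ordinary) Lie subalgebra of $P$, and its $H$-span $H\mathfrak g$ coincides with all of $P$ and is therefore trivially a conformal subalgebra.

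I would then apply Proposition~\ref{prop:Poisson-LieMod} with $L=\Cur\mathfrak g$. The proposition immediately produces a conformal module structure on $P$ over $\Cur\mathfrak g$ given by $\langle a\ool\lambda u\rangle = [a\ool\lambda u]+\lambda(a\ool\lambda u)$, which, evaluated on $a\in\mathfrak g$ and $u\in V$ using the defining $\lambda$-products of $\Cur V$, reduces to $\langle a\ool\lambda u\rangle=[a,u]+\lambda\, au$---precisely the operation highlighted in the introduction.

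No serious obstacle stands in the way: the only content beyond invoking Proposition~\ref{prop:Poisson-LieMod} is the tautological identification of the Lie conformal subalgebra of $\Cur V$ generated by $\mathfrak g$ with $\Cur\mathfrak g$ itself. The corollary is thus essentially a restatement of the motivating observation from the introduction, now cast in the framework developed in Section~\ref{sec:ConfPoisson}.
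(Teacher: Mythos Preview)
Your proposal is correct and matches the paper's approach: the paper states Corollary~\ref{cor:PoissonCurrent} immediately after Proposition~\ref{prop:Poisson-LieMod} with no separate proof, treating it as a direct specialization to $P=\Cur V$ via Example~\ref{exmp:CurrP}. Your only addition is spelling out the tautological identification of the underlying Lie conformal algebra of $\Cur V$ with $\Cur\mathfrak g$, which is exactly what is needed; note, though, that the corollary as stated allows $\mathfrak g$ to be an arbitrary Lie subalgebra of (the Lie algebra underlying) $V$, not only $\mathfrak g=V$, and your argument applies verbatim in that generality since $H\otimes\mathfrak g$ is then a Lie conformal subalgebra of $\Cur V$.
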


The purpose of this note is to establish more complicated Poisson conformal algebras whose commutative 
operation may not be reduced to a current-type structure. As in the case of ordinary algebras, it is natural
to seek among universal enveloping associative algebras. 

Given a Lie algebra $\mathfrak g$, let $P(\mathfrak g)$ be its symmetric algebra 
equipped with Poisson bracket $[\cdot,\cdot]$ induced by the commutator on $\mathfrak g$. 
As a linear space, $P(\mathfrak g)$ is isomorphic to the universal associative envelope $U(\mathfrak g)$
by the Poincar\'e--Birkhoff--Witt (PBW) Theorem.

For Lie conformal algebras, we have a hierarchy of universal associative envelopes \cite{Roit2000}.
Given a Lie conformal algebra $L$, an {\em associative envelope} of $L$ is an associative conformal algebra $C$
 equipped with a homomorphism (not necessarily injective) $\varphi : L\to C^{(-)}$ such that $C$ is generated 
by $\varphi(L)$ as a conformal algebra. 
Suppose $X$ is a generating set of $L$ as of $H$-module.
Fix a function $N:X\times X\to \mathbb Z_+$. Then the class of associative envelopes $(C,\varphi)$ of $L$, 
such that
$N_C(\varphi(x),\varphi(y))\le N(x,y)$ for all $x,y\in X$
contains a unique (up to isomorphism)
universal associative envelope $(U(L; X,N),\iota )$, $\iota :L\to U(L; X,N)^{(-)}$.

Associative conformal algebra $U(L;X,N)$ has a natural ascending filtration, the corresponding 
associated graded space carries a structure of a Poisson conformal algebra (see Section~\ref{sec:Brackets} for details).
In order to study this structure, we need to determine a normal form of elements in $U(L;X,N)$.
As in the case of ordinary algebras, $U(L;X,N)$ is determined by defining relations.
In the next section, we present a general approach to the study of conformal algebras
given by generators and relations, a sort of Composition-Diamond Lemma (CD-Lemma) for conformal algebras. 
Previous versions of the CD-Lemma for associative conformal algebras \cite{BFK00, BFK2004, NiChen2017}
work for bounded functions~$N$. Our approach does not depend on $N$ and, which is more important, 
we reduce the problem to modules over ordinary associative algebras. Therefore, one may apply 
available computer algebra packages for computations in conformal algebras within this approach.

\section{A version of the Diamond Lemma for associative conformal algebras}\label{sec:GSB}

Let $X$ be a well-ordered set, and let $N:X\times X\to \mathbb Z_+$ be a 
fixed function. Denote by $\Conf(X,N)$ the free associative conformal algebra 
generated by $X$ with respect to locality function $N$ \cite{Roit1999}. 
One may choose a linear basis of $\Conf(X,N)$ in the form 
\begin{equation}\label{eq:Conf_basis}
\begin{gathered}
 \partial ^s (a_1\oo{n_1} (a_2\oo{n_2} \dots \oo{n_{k-1}}(a_k \oo{n_k} a_{k+1} ) \dots )), \\
  k,s\in \mathbb Z_+,\ a_i\in X,\ 0\le n_i<N(a_i,a_{i+1}).
\end{gathered}
\end{equation}
Consider linear operators $L_n^a$ and $R_n^a$ on $\Conf(X,N)$ defined as follows:
\[
 L_n^a(f) = a\oo{n} f,\quad R_n^a(f) = \{f\oo{n} a\}, 
\]
$a\in X$, $n\in \mathbb Z_+$, $f\in \Conf(X,N)$,
where 
$\{x\oo{n} y\}$ is given by \eqref{eq:Curly}.
The axioms of an associative conformal algebra 
imply the following relations to hold in $\End\Conf(X,N)$:
\begin{gather}
 L_n^a\partial = \partial L_n^a + nL_{n-1}^a,  \label{eq:Alg_A:Relations1} \\
 R_n^a\partial = \partial R_n^a + nR_{n-1}^a,  \label{eq:Alg_A:Relations2} \\
 R_m^bL_n^a = L_n^aR_m^b.                      \label{eq:Alg_A:Relations3}
\end{gather}
Hence, $\Conf(X,N)$ is a (left) module over the ordinary associative algebra $A(X)$
generated by formal variables $\partial $, $L_n^a$, $R_n^a$ relative to 
the relations \eqref{eq:Alg_A:Relations1}--\eqref{eq:Alg_A:Relations3}.

It is not hard to find the defining relations of $\Conf(X,N)$ as of $A(X)$-module. 

\begin{theorem}[\cite{Kol_ICAC17}]
Let $M$ be a left $A(X)$-module generated by $X$ relative to 
the defining relations
 \begin{gather}
 L_n^a b =0, \quad a,b\in X,\ n\ge N(a,b),                       \label{eq:Mod_M:Locality}\\
 R_n^b a = (-1)^n\sum\limits_{s=0}^{N(a,b)-n-1} \partial^{(s)} L_{n+s}^a b,
    \quad a,b\in X,\ n\in \mathbb Z_+.                        \label{eq:Mod_M:RightMul}
\end{gather}
Then $M$ is isomorphic to $\Conf(X,N)$ as $A(X)$-module.
\end{theorem}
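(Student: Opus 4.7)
The plan is to construct a surjective $A(X)$-module homomorphism $\varphi\colon M\to\Conf(X,N)$ and prove its injectivity by producing a $\Bbbk$-spanning set of $M$ in bijection with the basis \eqref{eq:Conf_basis}. First I would equip $\Conf(X,N)$ with an $A(X)$-module structure in which $\partial$ acts naturally, $L_n^a$ acts by $f\mapsto a\oo{n} f$, and $R_n^a$ acts by $f\mapsto \{f\oo{n} a\}$: relations \eqref{eq:Alg_A:Relations1}--\eqref{eq:Alg_A:Relations3} hold by virtue of \eqref{eq:3/2-linear} and \eqref{eq:ConfAss}; \eqref{eq:Mod_M:Locality} is just locality; and \eqref{eq:Mod_M:RightMul} follows by truncating the series \eqref{eq:Curly} against the locality bound. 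Since $\Conf(X,N)$ is generated by $X$ under $\partial$ and the left $n$-products, the universal property of $M$ yields the desired surjection $\varphi$.

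For injectivity, I would reduce every element of $M$ to a $\Bbbk$-linear combination of monomials $\partial^s L_{n_1}^{a_1}\cdots L_{n_k}^{a_k} a_{k+1}$ with $0\le n_i < N(a_i,a_{i+1})$ for all $i$; under $\varphi$ these map bijectively onto the basis \eqref{eq:Conf_basis}, forcing their linear independence in $M$. The reduction proceeds in several passes. Using only the $A(X)$-relations, any element of $A(X)\cdot c$, $c\in X$, can be brought to the form
\[
 \partial^s L_{n_1}^{a_1}\cdots L_{n_p}^{a_p} R_{m_1}^{b_1}\cdots R_{m_q}^{b_q} c.
\]
The rightmost $R$ is then expanded by \eqref{eq:Mod_M:RightMul}, the resulting $L$'s are commuted to the left of the surviving $R$'s via \eqref{eq:Alg_A:Relations3} (with $\partial$'s absorbed through \eqref{eq:Alg_A:Relations1}--\eqref{eq:Alg_A:Relations2}), and the step is iterated until no $R$'s remain; \eqref{eq:Mod_M:Locality} then takes care of the outermost $L$. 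The delicate point is to force \emph{inner} over-local indices into the locality range: I would solve \eqref{eq:Mod_M:RightMul} to rewrite $L_{n_k}^{a_k} a_{k+1}$ as a finite combination of $\partial^{(s)} R_{n_k+s}^{a_{k+1}} a_k$, commute the resulting $R$ to the left past $L_{n_{k-1}}^{a_{k-1}}$, and re-expose a shorter tail $L_{n_{k-1}-r}^{a_{k-1}} a_k$ on which \eqref{eq:Mod_M:Locality} may now fire when $n_{k-1}\ge N(a_{k-1},a_k)$; recursing pushes all over-local indices outward until they are annihilated.

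The main obstacle is confirming that this inside-out rewriting terminates and is confluent: one needs a well-founded order (say lexicographic on $(k,n_1,\dots,n_{k-1})$, with over-local entries weighted to dominate) on which every inverted application of \eqref{eq:Mod_M:RightMul} is strictly decreasing, and one must verify that different reduction orders yield the same irreducible form. This is the Diamond-Lemma content of the statement; once it is in place, the reduced monomials form a $\Bbbk$-basis of $M$ and $\varphi$ is an isomorphism.
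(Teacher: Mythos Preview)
Your outline---exhibit the surjection $M\to\Conf(X,N)$ and then show that the monomials $\partial^s L_{n_1}^{a_1}\cdots L_{n_k}^{a_k}a_{k+1}$ with $0\le n_i<N(a_i,a_{i+1})$ span $M$---is exactly the paper's strategy, and your elimination of the $R$'s via \eqref{eq:Mod_M:RightMul} is also the same first pass the paper makes.

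Where the paper is more concrete is precisely at the step you flag as ``the main obstacle.'' Your invert-\eqref{eq:Mod_M:RightMul}-and-commute maneuver is a little tangled as written (the shift $n_{k-1}\mapsto n_{k-1}-r$ comes from pushing $\partial$ through $L$, not from commuting $R$ past $L$, and after the swap you still carry an $R$ that must eventually be re-expanded by \eqref{eq:Mod_M:RightMul}, so the termination is not transparent). The paper bypasses this by deriving once and for all, from \eqref{eq:Alg_A:Relations3}, \eqref{eq:Mod_M:Locality}, \eqref{eq:Mod_M:RightMul}, the single rewriting rule
\[
 L_n^a L_m^b u \;=\; \sum_{q\ge 1}(-1)^{q+1}\binom{n}{q}\, L_{n-q}^a L_{m+q}^b\, u,\qquad n\ge N(a,b),
\]
valid for every $u=L_{m_1}^{c_1}\cdots L_{m_k}^{c_k}c_{k+1}$; this shifts an over-local inner index down while pushing weight to the right. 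The relations \eqref{eq:Mod_M:Locality}, \eqref{eq:Mod_M:RightMul} together with this rule form a Gr\"obner--Shirshov basis of the $A(X)$-module under the monomial order that compares first by total degree in the $R$'s and then deg-lex, so termination and confluence are supplied by the module Diamond Lemma of \cite{KangLee} rather than by an ad hoc well-founded order on $(k,n_1,\dots,n_{k-1})$. Your procedure, once straightened out, is essentially a rederivation of this rule; the paper just packages the induction as an explicit relation and delegates the bookkeeping to the GSB machinery.
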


\begin{proof}[Sketch of the proof]
Obviously, \eqref{eq:Mod_M:Locality} and \eqref{eq:Mod_M:RightMul} hold in $\Conf(X,N)$.
The only problem is to show that the $A(X)$-module homomorphism 
$M\to \Conf(X,N)$ is injective. To resolve this problem, 
it is natural to apply the Gr\"obner--Shirshov bases technique for 
modules \cite{KangLee}.

Given a well order on $X$, extend it to $L_n^a$ and $R_n^a$ by the natural rule
$L_n^a <L_m^b$ (or $R_n^a<R_m^b$) if $n<m$ or $n=m$ and $a<b$; 
assume $\partial <L_n^a<R_m^b$ for all $a,b\in X$, $n,m\in \mathbb Z_+$. 
Next, define the following monomial order $\prec $ on the words in the alphabet $\partial $, $L_n^a$, $R_n^a$:
compare two words first by their degree in the variables $R_n^a$, then by deg-lex order.

Obviously, relations \eqref{eq:Alg_A:Relations1}--\eqref{eq:Alg_A:Relations3} form a GSB
 of $A(X)$, which is actually the universal enveloping algebra of some Lie algebra. 
Hence, the linear basis $B$ of $A(X)$ consists of all words of the form
\[
 \partial^s L_{n_1}^{a_1}\dots L_{n_k}^{a_k} R_{m_1}^{b_1}\dots R_{m_t}^{b_t}.
\]

Expand the above monomial order $\prec $ to the monomials in the free $A(X)$-module generated by $X$: 
for $u,v\in B$ and $x,y\in X$, 
let $ux\prec vy$ if and only if $u\prec v$ or $u=v$   and $x<y$.

It is easy to see that 
\eqref{eq:Alg_A:Relations3}, \eqref{eq:Mod_M:Locality}, and \eqref{eq:Mod_M:RightMul}
imply 
a series of relations 
\begin{equation}\label{eq:Mod_M:Locality_ex}
L_n^aL_m^bu = \sum\limits_{q\ge 1} (-1)^{q+1} \binom{n}{q} L_{n-q}^aL_{m+q}^b u,
\end{equation}
where $a,b\in X$, $n\ge N(a,b)$, $m\in \mathbb Z_+$, 
and $u$ is of the form 
\[
 L_{m_1}^{c_1}L_{m_2}^{c_2}\dots L_{m_k}^{c_k} c_{k+1} ,\quad  
c_i\in X,\ k,m_i\in \mathbb Z_+.
\]

Consider the {\em reduced words}, i.e., those monomials in the free $A(X)$-module generated by $X$ that do not 
contain a subword equal to a principal part of 
\eqref{eq:Mod_M:Locality}--\eqref{eq:Mod_M:Locality_ex}, 
The latter principal parts are equal to 
$L_n^a b$ for $n\ge N(a,b)$, $R_n^ab$, and $L_n^aL_m^bu$ for $n\ge N(a,b)$.
Therefore, $M$ is spanned by the reduced words that are of the form 
\[
 \partial^s L^{a_1}_{n_1} L^{a_2}_{n_2} \dots L^{a_k}_{{n_k}} a_{k+1},
\]
and their images in \eqref{eq:Conf_basis} are linearly independent. Hence, 
\eqref{eq:Mod_M:Locality}--\eqref{eq:Mod_M:Locality_ex} is a GSB of 
$\Conf(X,N)$ in the sense of \cite{KangLee}.
\end{proof}

By the definition of an $A(X)$-module structure on $\Conf(X,N)$, 
a subspace $I\subset\Conf(X,N)$ 
is an ideal of the conformal algebra $\Conf(X,N)$
if and only if $I$ is an $A(X)$-submodule. 
If $S\subset \Conf(X,N)$ is a set of conformal polynomials then 
the ideal generated by $S$ in the conformal algebra $\Conf(X,N)$ coincides 
with the $A(X)$-submodule generated by~$S$. Therefore, 
in order to solve the word problem in an associative conformal algebra 
defined by generators and relations  it is enough to solve 
that problem in the corresponding module over an ordinary associative algebra. 

In general, if an associative algebra $A$ and (left) $A$-module $M$ are defined via generators and relations
(say, $A$ and $M$ are generated by $X$ and $Y$, respectively)
then the problem of finding normal forms in $M$ was considered in \cite{KangLee}.
However, one may apply the ordinary Composition-Diamond Lemma for associative algebras to the split null extension 
$A\oplus M$ assuming obvious additional relations $yx=0$, $yz=0$ for $x\in X$, $y,z\in Y$.

\begin{corollary}[CD-Lemma]\label{cor:CD-lem}
 Let $S$ be a set of conformal polynomials in $\Conf(X,N)$ considered 
 as elements of the free $A(X)$-module generated by $X$. Then the following 
 conditions are equivalent:
 \begin{enumerate}
  \item $S$ together with \eqref{eq:Mod_M:Locality}, \eqref{eq:Mod_M:RightMul}, and \eqref{eq:Mod_M:Locality_ex} is a GSB of an $A(X)$-module;
  \item $S$-reduced words of the form 
  \[
  \partial ^s L^{a_1}_{n_1}L^{a_2}_{n_2} \dots L^{a_k}_{n_k} a_{k+1} , \quad
  k,s\in \mathbb Z_+,\ a_i\in X,\ 0\le n_i<N(a_i,a_{i+1}),
  \]
  form a linear basis of $\Conf(X,N \mid S)$.
 \end{enumerate}
\end{corollary}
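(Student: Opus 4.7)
\emph{Plan.} The corollary is essentially a direct consequence of the preceding theorem together with the classical Composition--Diamond Lemma for associative algebras, applied via the split null extension trick indicated in the paragraph just above the statement. The theorem identifies $\Conf(X,N)$ with the left $A(X)$-module generated by $X$ modulo \eqref{eq:Mod_M:Locality}--\eqref{eq:Mod_M:Locality_ex}, and exhibits those relations, together with \eqref{eq:Alg_A:Relations1}--\eqref{eq:Alg_A:Relations3}, as a GSB with respect to the order $\prec$ constructed there. Because the conformal ideal generated by $S$ in $\Conf(X,N)$ coincides with the $A(X)$-submodule it generates (as observed just before the corollary), the quotient $\Conf(X,N\mid S)$ is presented as an $A(X)$-module by the generators $X$ modulo $S$ together with the module-defining relations.

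\emph{Key step.} I would rewrite this presentation inside the split null extension $\tilde A=A(X)\oplus F$, where $F$ is the free $A(X)$-module generated by $X$, imposing $yu=0$ for every $y\in X$ and every generator $u$ of $\tilde A$. In this ordinary associative algebra, the combined list of relations --- \eqref{eq:Alg_A:Relations1}--\eqref{eq:Alg_A:Relations3}, the annihilation relations, the module relations \eqref{eq:Mod_M:Locality}--\eqref{eq:Mod_M:Locality_ex}, and $S$ --- defines the quotient $A(X)\oplus \Conf(X,N\mid S)$. Extending $\prec$ to the obvious monomial order on $\tilde A$ and applying Bergman's classical Composition--Diamond Lemma for associative algebras gives directly the equivalence of the GSB property for the combined set and the statement that $\prec$-reduced words form a linear basis of the quotient. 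Projecting onto the $F$-summand and recalling from the theorem that reduced words there have exactly the shape $\partial^s L^{a_1}_{n_1}\dots L^{a_k}_{n_k}a_{k+1}$ with $0\le n_i<N(a_i,a_{i+1})$, one obtains the equivalence (1)$\Leftrightarrow$(2).

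\emph{Main obstacle.} The subtle point is to be sure that no additional ``cross'' compositions between $S$ and the background relations \eqref{eq:Mod_M:Locality}--\eqref{eq:Mod_M:Locality_ex} need to be inserted. The order $\prec$ is rigged so that $R$-letters dominate $L$-letters which dominate $\partial$, and by the theorem every element of $S$ may be assumed to be in left-normal form, involving no $R$-letters. Consequently, the only overlaps between the principal parts of $S$ and those of \eqref{eq:Mod_M:Locality} or \eqref{eq:Mod_M:Locality_ex} occur at the leftmost $L$-letters of a reduced word, and checking that such compositions reduce to zero modulo the combined set reuses the composition analysis from the proof of the theorem. Once this is in place, the equivalence (1)$\Leftrightarrow$(2) is the formal content of the CD-Lemma applied to $\tilde A$.
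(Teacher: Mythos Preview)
Your plan and key step are correct and match the paper's (implicit) argument exactly: the corollary is stated in the paper without a separate proof, being an immediate consequence of the preceding theorem, the identification of conformal ideals with $A(X)$-submodules, and the classical CD-Lemma for modules (via Kang--Lee or the split null extension).

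Your ``Main obstacle'' paragraph, however, reflects a misunderstanding rather than a genuine difficulty. Condition (1) \emph{asserts} that $S$ together with \eqref{eq:Mod_M:Locality}, \eqref{eq:Mod_M:RightMul}, \eqref{eq:Mod_M:Locality_ex} is a GSB; by definition this already includes the triviality of all compositions, cross compositions with the background relations among them. There is nothing for you to check here: the equivalence (1)$\Leftrightarrow$(2) is nothing more than the tautological content of the CD-Lemma (a set is a GSB iff the reduced words form a basis of the quotient), once the theorem has identified $\Conf(X,N\mid S)$ with the appropriate module quotient and described the shape of the words reduced with respect to the background relations. Your remarks about $S$ being in left-normal form and about reusing the composition analysis from the theorem are unnecessary and should be removed; they suggest you are trying to \emph{prove} (1) rather than simply deduce the stated equivalence.
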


To study the structure of a universal associative enveloping conformal algebra 
of a Lie conformal (super)algebra, it is convenient to add more defining relations 
to the algebra $A(X)$. 
Namely, 
suppose $L$ is a Lie conformal superalgebra which is a free $H$-module, and let 
$X$ be a homogeneous basis of $L$ over $H$. 
Recall that the following identity holds in every associative conformal algebra \cite{Roit2000}:
\[
 x\oo{n} (y\oo{m} z) - (-1)^{|x||y|}y\oo{m} (x\oo{n} z) = \sum\limits_{s\ge 0} \binom{n}{s} [x\oo{s} y]\oo{n+m-s} z.
\]
Therefore, $U(L; X,N)$ is a module over the associative algebra 
$A(X,L)$ generated by $\partial $, $L_n^a$, $R_n^a$ ($a\in X$, $n\in \mathbb Z_+$) 
relative to the defining relations 
\eqref{eq:Alg_A:Relations1}--\eqref{eq:Alg_A:Relations3} and
\begin{equation}\label{eq:CommL-operators}
\begin{gathered}
 L^a_nL^b_m  - (-1)^{|a||b|}L^b_mL_n^a  = \sum\limits_{s\ge 0}\binom{n}{s} L^{[a\oo{s} b]}_{n+m-s} ,\\
 a,b\in X,\ n,m\in \mathbb Z_+,\ L_n^a>L_m^b.
\end{gathered}
\end{equation}
Here we assume $L^{\partial x}_n = -nL_{n-1}^x$
to express the right-hand side of \eqref{eq:CommL-operators}.

Defining relations of $U(L; X,N)$
as of $A(X,L)$-module include \eqref{eq:Mod_M:Locality}, \eqref{eq:Mod_M:RightMul}, and 
\begin{equation}\label{eq:Mod_M:Comm}
 R_n^ab - (-1)^{|a||b|} L_n^ab = -[a\oo{n} b],\quad a,b\in X,\ n\in \mathbb Z_+. 
\end{equation}

It is not hard to see that \eqref{eq:Alg_A:Relations1}--\eqref{eq:Alg_A:Relations3}, \eqref{eq:CommL-operators}
form a GSB of the associative algebra $A(X,L)$. In order to determine the 
structure of $U(L;X,N)$ it is enough to find a GSB of the $A(X,L)$-module 
generated by $X$ relative to \eqref{eq:Mod_M:Locality}, \eqref{eq:Mod_M:RightMul}, and \eqref{eq:Mod_M:Comm}.

\section{Example: Universal envelope of the Neveu--Schwartz conformal superalgebra}\label{sec:exmp}

Consider $L=K_1$, the Neveu--Schwartz conformal superalgebra (see \cite{Kac98}).
Then $X=\{v,g\}$, $|v|=0$, $|g|=1$, and the multiplication table is given by
\[
 [v\ool{\lambda } v] = \partial v + 2\lambda v, \quad
 [g\ool{\lambda } v] =  \dfrac{1}{2} \partial g + \dfrac{3}{2}\lambda g, \quad 
 [g\ool{\lambda } g] = -\dfrac{1}{2} v.
\]
Assume $v<g$.
For convenience of computation, let us slightly change the order $\prec $ assuming 
$L_0^x < L_1^x <\partial <L_2^x <\dots $, $x\in X$ (other rules remain the same).
The set of defining relations of $A(X,K_1)$ consists of
\begin{equation}\label{eq:K1_Alg_Relations}
\begin{gathered}
\partial L_0^x = L_0^x\partial ,\quad \partial L_1^x=  L_1^x\partial - L_0^x, \quad x\in X; \\
L_n^x \partial  = \partial L_n^x + nL_{n-1}^x,\quad n\ge 2,\ x\in X; \\
R_n^x \partial  = \partial R_n^x + nR_{n-1}^x,\quad n\ge 0,\ x\in X; \\
R_n^xL_m^y = L_m^yR_n^x,\quad n,m\ge 0, \ x,y\in X; \\
L_n^v L_m^v = L_m^vL_n^v + (n-m)L_{n+m-1}^v,\quad n>m\ge 0; \\
L_n^v L_m^g = L_m^g L_n^v + \left(\frac{1}{2}n - m\right)L^g_{n+m-1}, \quad  n>m\ge 0; \\
L_n^g L_m^v = L_m^v L_n^g + \left(n - \frac{1}{2} m\right)L^g_{n+m-1}, \quad n\ge m\ge 0; \\
L_n^g L_m^g = -L_m^g L_n^g -\dfrac{1}{2} L^v_{n+m}, \quad n>m\ge 0;\\
L_n^gL_n^g = -\dfrac{1}{4} L^v_{2n},\quad n\ge 0.
\end{gathered}
\end{equation}

In \cite{Kol2002}, a GSB of $U(K_1; X,N)$ in the sense of \cite{BFK2000}
was found for 
\begin{equation}\label{eq:K1-Locality}
N(v,v)=3,\quad N(v,g)=N(g,v)=N(g,g)=2. 
\end{equation}
Let us show how the technique exposed in the 
previous section works for the same locality function~$N$.

According to the general scheme described above, 
the following relations determine $U(K_1;X,N)$ as an $A(X,K_1)$-module:
\begin{equation}\label{eq:K1_Mod_Relations}
 \begin{gathered}
L_n^xy = 0,\quad x,y \in X,\ n\ge N(x,y); \\
R_n^xy = 0, \quad x,y\in X,\ n\ge N(y,x); \\
R_0^v v = L_0^v v -\partial L_{1}^v v + \partial^{(2)} L_{2}^v v; \\
R_1^v v = - L_1^v v+\partial L_2^v v, \quad R_2^v v = L_2^v v; \\
R_n^x y = (-1)^n (L_n^y x - \partial L_{n+1}^y x),\quad
\{x,y\} =X, \ n=0,1; \\
R_0^g g = L_0^g g - \partial L_1^g g; \quad
R_1^g g = -L_1^g g; \\
R_0^v v = L_0^v v -\partial v; \quad
R_1^v v = L_1^v v - 2v;  \\
R_0^v g = L_0^v g - \partial g; \quad
R_1^v g = L_1^v g - \dfrac{3}{2} g;  \\
R_0^g v = L_0^g v - \dfrac{1}{2}\partial g; \quad
R_1^g v = L_1^g v - \dfrac{3}{2} g;\\
R_0^g g = -L_0^g g - \dfrac{1}{2} v;\quad 
R_1^g g = -L_1^g g .  \\
 \end{gathered}
\end{equation}

Calculation of a GSB of the $A(X,K_1)$-module 
generated by $X$ with defining relations 
\eqref{eq:K1_Mod_Relations} is a standard computational task: one has to add 
all non-trivial compositions to the set of defining relations.

\begin{theorem}\label{thm:K1_GSB}
In order to obtain a GSB of $U(K_1;X,N)$ 
for $N$ given by \eqref{eq:K1-Locality}
it is enough to enrich the system \eqref{eq:K1_Mod_Relations} with 
the following relations:
\begin{equation}\label{eq:K1_GSB}
 \begin{gathered}
L_2^v v = -2 L_1^g g; \quad 
L_1^v v = -2 L_0^g g  + \dfrac{1}{2}v; \\
L_1^g v = - L_1^v g  + \dfrac{3}{2}g; \quad
L_1^v L_1^v g = \dfrac{3}{2}L_1^v g  -\dfrac{1}{2}g; \\
L_1^v L_1^g g = \dfrac{1}{2} L_1^g g; \quad
L_0^g L_1^v g = \dfrac{1}{2} L_0^v L_1^g g + \dfrac{1}{2}L_0^g g; \\
L_0^g L_1^g g = -\dfrac{1}{2}L_1^v g  + \dfrac{1}{4}g; \\
L_1^v \partial^s v = -2L_0^g\partial ^s g + \dfrac{1}{2} \partial^s v + sL_0^v\partial^{s-1} v,\quad s\ge1; \\
L_1^g \partial^s v = -L_0^v\partial ^{s-1} g + \partial^s g + (s+1)L_0^g\partial^{s-1} v,\quad s\ge1; \\
L_1^v \partial^s g = -L_0^g\partial ^{s-1} v + \dfrac{1}{2}\partial^s g + (s+1)L_0^v\partial^{s-1} g,\quad s\ge1; \\
L_1^g \partial^s g = (s+2)L_0^g \partial^{s-1} g + \dfrac{1}{2}\partial^{s-1} v,\quad s\ge1.
 \end{gathered}
\end{equation}
\end{theorem}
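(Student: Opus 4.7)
The plan is to apply the Composition-Diamond Lemma for associative conformal algebras (Corollary \ref{cor:CD-lem}) to the $A(X,K_1)$-module defined by the generators $X=\{v,g\}$ and the module relations \eqref{eq:K1_Mod_Relations}. Concretely, one must verify two things: first, that every relation in \eqref{eq:K1_GSB} actually holds in $U(K_1;X,N)$, so that enriching the presentation does not change the module; second, that the enriched system is closed under composition, hence forms a Gr\"obner--Shirshov basis.

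For the first part, I would derive each relation in \eqref{eq:K1_GSB} from \eqref{eq:K1_Alg_Relations} and \eqref{eq:K1_Mod_Relations}. The key observation is that the algebra relation $L_n^g L_n^g = -\frac{1}{4}L^v_{2n}$, applied to the generator $g$, gives $L_1^g L_1^g g = -\frac{1}{4}L_2^v g = 0$ by locality $N(v,g)=2$; expanding $L_1^g g$ via the supercommutator rule \eqref{eq:Mod_M:Comm} and rearranging yields the relation $L_2^v v = -2 L_1^g g$. The relations $L_1^v v = -2L_0^g g + \frac{1}{2}v$ and $L_1^g v = -L_1^v g + \frac{3}{2} g$ are obtained similarly, from the supercommutators $L_n^v L_m^v$ and $L_n^v L_m^g$ in \eqref{eq:K1_Alg_Relations}. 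The four quadratic relations (leading terms $L_1^v L_1^v g$, $L_1^v L_1^g g$, $L_0^g L_1^v g$, $L_0^g L_1^g g$) arise by left-multiplying a previously derived relation by an appropriate $L$-operator and reducing. Finally, the four infinite families $L_1^x \partial^s y$ are produced inductively by composing the $\partial$-shift relation $L_1^x \partial = \partial L_1^x - L_0^x$ (from \eqref{eq:Alg_A:Relations1}) with the scalar relations, tracking how $\partial$ passes through $L_0^x$ and $L_1^x$.

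For the second part, I would enumerate all critical pairs between principal parts of relations in the enriched system, including \eqref{eq:Mod_M:Locality_ex}. The overlaps fall into finitely many types once the $\partial^s$-families are parameterized by $s$: overlaps of an algebra relation with a module relation, overlaps of two module relations sharing a common prefix ending in $v$ or $g$, and overlaps produced by $\partial$ passing through successive $L$-operators. For each pair one forms the composition and reduces it using the remaining relations in \eqref{eq:K1_Alg_Relations}, \eqref{eq:K1_Mod_Relations}, \eqref{eq:K1_GSB}, checking that the result vanishes.

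The main obstacle is the bookkeeping for the infinite $\partial^s$-families: each composition involving the $\partial$-shift can potentially produce a new relation at a higher power of $\partial$, and one has to verify that the recursion closes to exactly the formulas listed in \eqref{eq:K1_GSB}, with no exceptional low values of $s$ generating additional obstructions. The modified order $L_0^x<L_1^x<\partial<L_2^x<\dots$ is precisely what makes the principal parts of the $\partial^s$-families have the expected form $L_1^x\partial^s y$, so one should verify at the outset that this choice keeps all reductions finite. Once every composition is shown to be trivial, Corollary \ref{cor:CD-lem} delivers the conclusion.
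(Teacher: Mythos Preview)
Your plan is correct and coincides with the paper's approach: the theorem is established by running the Gr\"obner--Shirshov completion on the $A(X,K_1)$-module relations \eqref{eq:K1_Mod_Relations} and checking that all compositions reduce to zero once \eqref{eq:K1_GSB} is added. The paper itself does not carry out the computation in print; it refers to \cite{Kol2002} and remarks that the closure can be verified by a computer algebra system, so your outline is already more explicit than what the paper provides. One small correction: your derivation of $L_2^v v=-2L_1^g g$ from $L_1^gL_1^g=-\tfrac14 L_2^v$ should act on $v$, not on $g$ (acting on $g$ gives $0=0$), and the reduction of $L_1^gL_1^g v$ then uses $L_1^g v=-L_1^v g+\tfrac32 g$ and $L_1^vL_1^g g=\tfrac12 L_1^g g$, so the relations are mutually entangled rather than derivable in the linear order you sketch---but this is exactly the kind of interdependence that the composition procedure sorts out automatically.
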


This result is agreed with the computations in \cite{Kol2002}, so we do not present the details here. 
However, Theorem \ref{thm:K1_GSB} may be easily checked by means of computer algebra 
systems 
providing an opportunity of (step-by-step) computation of GSBs 
in non-commutative associative algebras. 

\begin{corollary}\label{cor:K1_basis}
The following words form a linear basis of $U(K_1;X,N)$: 
\begin{equation}\label{eq:K1_reduced}
    \big(L_0^v\big )^n \partial^s x, \quad
   \big(L_0^v\big )^n L_0^g \partial^s x, \quad
   \big(L_0^v\big )^n L_1^x g, \quad n\ge 0,\ x\in X. 
 \end{equation}
\end{corollary}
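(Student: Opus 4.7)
The plan is to combine the CD-Lemma (Corollary~\ref{cor:CD-lem}) with Theorem~\ref{thm:K1_GSB}: the former guarantees that the $S$-reduced words with respect to the complete GSB form a linear basis of $U(K_1;X,N)$, while the latter supplies that GSB explicitly. The corollary is thereby reduced to the combinatorial assertion that the list \eqref{eq:K1_reduced} enumerates exactly those words in the free $A(X,K_1)$-module on $X$ that contain no leading monomial of \eqref{eq:K1_Alg_Relations}, \eqref{eq:K1_Mod_Relations}, \eqref{eq:Mod_M:Locality_ex}, or \eqref{eq:K1_GSB}.

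First I would collect the forbidden subwords. Any occurrence of $R_n^x$, or of $L_n^x y$ with $n\ge N(x,y)$, is excluded by \eqref{eq:K1_Mod_Relations}; the commutation identities in \eqref{eq:K1_Alg_Relations} force the $L$-letters to appear in non-decreasing order under $L_0^v<L_0^g<L_1^v<L_1^g<L_2^v<\dots$, and render any $L_n^g L_n^g$ reducible, so no two consecutive $L_0^g$'s can survive. Combining the locality extension \eqref{eq:Mod_M:Locality_ex} with this sorting, any operator $L_n^x$ with $n\ge 2$ eventually migrates to the right end of the word and is either annihilated by locality ($L_n^v v=0$ for $n\ge 3$ and $L_n^v g=L_n^g v=L_n^g g=0$ for $n\ge 2$) or reduced via $L_2^v v=-2L_1^g g$ from \eqref{eq:K1_GSB}. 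Hence the only interior $L$-letters that can appear in a reduced word are $L_0^v$, $L_0^g$, $L_1^v$, $L_1^g$, with at most one copy of $L_0^g$.

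Next I would use \eqref{eq:K1_GSB} to locate the $L_1^x$ operators. The four identities $L_1^x \partial^s y=\dots$ forbid an $L_1^x$-letter from being followed by $\partial$; the relations $L_1^v v$ and $L_1^g v$ forbid it from being followed by $v$; and the length-three compositions $L_1^v L_1^v g$, $L_1^v L_1^g g$, $L_0^g L_1^v g$, $L_0^g L_1^g g$ forbid either a second $L_1^x$ or a preceding $L_0^g$ once $L_1^x$ stands immediately before the terminal $g$. Together with the sorting rule this pins any $L_1^x$ to the penultimate position, preceded only by powers of $L_0^v$, producing the family $(L_0^v)^n L_1^x g$. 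When no $L_1^x$ appears, the allowed prefixes are $(L_0^v)^n$ or $(L_0^v)^n L_0^g$, followed by $\partial^s x$ after using $\partial L_0^x=L_0^x\partial$ to send all $\partial$'s to the right. These are precisely the three families of \eqref{eq:K1_reduced}.

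The main obstacle is the opposite direction: one must check that each word in \eqref{eq:K1_reduced} is genuinely reduced with respect to the full GSB. This amounts to inspecting every length-two subword that actually occurs in the listed forms — $L_0^v L_0^v$, $L_0^v L_0^g$, $L_0^v L_1^v$, $L_0^v L_1^g$, $L_0^v \partial$, $L_0^g \partial$, $L_0^g v$, $L_0^g g$, $L_1^v g$, $L_1^g g$ — together with the length-three continuations appearing in $(L_0^v)^n L_1^x g$, and confirming that none of them coincides with a principal part of \eqref{eq:K1_Alg_Relations}, \eqref{eq:K1_Mod_Relations}, or \eqref{eq:K1_GSB} under the modified monomial order $L_0^x<L_1^x<\partial<L_2^x<\dots$. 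This is a patient but entirely routine case check.
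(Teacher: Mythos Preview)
Your proposal is correct and follows the same strategy as the paper: invoke Theorem~\ref{thm:K1_GSB} together with the CD-Lemma, then identify the $S$-reduced words, which the paper merely asserts coincide with \eqref{eq:K1_reduced} while you carry out the case analysis explicitly. One small inaccuracy: you cite \eqref{eq:Mod_M:Locality_ex} among the relations, but that belongs to the $A(X)$-module presentation; in the $A(X,K_1)$-approach actually being used, the commutation relations of \eqref{eq:K1_Alg_Relations} already sort the $L$-letters, so \eqref{eq:Mod_M:Locality_ex} is superfluous (and your argument does not really depend on it).
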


\begin{proof}
 Let $S$ be the set of defining relations  
 \eqref{eq:K1_Alg_Relations}, \eqref{eq:K1_Mod_Relations}, \eqref{eq:K1_GSB}.
 Then \eqref{eq:K1_reduced} is exactly the set of $S$-reduced words in the free $A(X,K_1)$-module 
 generated by~$X$. 
\end{proof}

 To make sure that the results of Theorem~\ref{thm:K1_GSB} and Corollary~\ref{cor:K1_basis} 
 are correct,
 one may recall the following presentation of $K_1$. Consider the associative conformal 
 algebra $\Cend_2$ with the natural $\mathbb Z_2$-grading as $\Bbbk[x,\partial ]\otimes M_{1|1}(\Bbbk )$.
 Then 
 \[
  v = \begin{pmatrix}
        x & 0 \\ 0 & x-\frac{1}{2}\partial 
      \end{pmatrix},
\quad       
  g = \frac{1}{2}\begin{pmatrix}
        0 & x \\ -1 & 0 
      \end{pmatrix}
 \]
span a Lie conformal superalgebra $L$ in $\Cend_2^{(-)}$ isomorphic to $K_1$.
Associative envelope $C$ of $L$ in $\Cend_2$ coincides with the set of all 
matrices 
\begin{equation}\label{eq:Cend2Q}
 \begin{pmatrix}
  xf_{11}(x,\partial ) & xf_{12}(x,\partial ) \\
   f_{21} (x,\partial ) & f_{22}(x,\partial )
 \end{pmatrix},
 \quad f_{ij}\in \Bbbk[x,\partial ].
\end{equation}
Straightforward computation shows that the images of \eqref{eq:K1_reduced} in $\Cend_2$
exactly form a linear basis of~$C$.

\begin{corollary}\label{cor:Vir3Envelope}
 For the Virasoro conformal algebra $\Vir$, 
 a linear basis of $U(\Vir; v,3)$ consists of 
 the words 
 \begin{equation}\label{eq:Vir-3Basis}
  L_0^n \partial^s L_1^m v,\quad L_0^nL_1^mL_2v,\quad n,m,s\ge 0,
 \end{equation}
where $L_k^n$ stands for $(L_k^v)^n$. In particular, 
$U(L;v,3)$ is a free $H$-module generated by 
\[
x_{n+1}= L_0^nv,\quad y_{n+1,m+1} = L_0^nL_1^mL_2v,\quad n,m\ge 0.
\]
\end{corollary}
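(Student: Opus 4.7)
The plan is to apply Corollary~\ref{cor:CD-lem} to $L=\Vir$, $X=\{v\}$, $N(v,v)=3$, in direct parallel with Theorem~\ref{thm:K1_GSB} and Corollary~\ref{cor:K1_basis}. First, the specialization of \eqref{eq:CommL-operators} to $\Vir$ produces the single family
\[
L_n^v L_m^v = L_m^v L_n^v + (n-m)L_{n+m-1}^v,\quad n>m\ge 0,
\]
since $[v\oo{0} v]=\partial v$, $[v\oo{1} v]=2v$ and $[v\oo{s} v]=0$ for $s\ge 2$. Together with \eqref{eq:Alg_A:Relations1}--\eqref{eq:Alg_A:Relations3}, this is a GSB of $A(\{v\},\Vir)$. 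The module defining relations from \eqref{eq:Mod_M:Locality}, \eqref{eq:Mod_M:RightMul}, \eqref{eq:Mod_M:Comm} collapse to $L_n^v v=0$ for $n\ge 3$, the three $R_n^v v$-expansions for $n=0,1,2$, and the three comm rules $R_n^v v - L_n^v v = -[v\oo{n} v]$. Matching the two derivations of $R_1^v v$ yields the essential new relation
\[
\partial L_2^v v = 2L_1^v v - 2v,
\]
while the $n=0$ match is a consequence of this and $n=2$ is automatic.

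Next, I would run the composition--diamond procedure using the monomial order $L_0^v<L_1^v<\partial<L_2^v$ (exactly as chosen for $K_1$) augmented by the extended locality \eqref{eq:Mod_M:Locality_ex}. The compositions of $\partial L_2^v v = 2L_1^v v - 2v$ with $L_n^v\partial=\partial L_n^v+nL_{n-1}^v$ and the Virasoro-type commutator above generate a small closed family of secondary reductions, chief among them $L_2^v L_2^v v = 0$ (one derives this by applying $L_2^v$ to both sides of $\partial L_2^v v = 2L_1^v v - 2v$, using $L_2^v\partial = \partial L_2^v + 2L_1^v$ and $L_2^v L_1^v = L_1^v L_2^v + L_2^v$ to cancel everything after $\partial$) and analogous rules ensuring that no reduced word contains two $L_2^v$'s or an $L_2^v$ with a $\partial$ or $L_n^v$ to its left. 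Once these are added, every ambiguity of the form left-relation meets right-relation can be checked to reduce to zero, and the $S$-reduced monomials are exactly those listed in \eqref{eq:Vir-3Basis}; Corollary~\ref{cor:CD-lem} then delivers these as a linear basis of $U(\Vir;v,3)$.

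For the $H$-module freeness, the relation $L_1^v v = v + \tfrac{1}{2}\partial L_2^v v$ (and its iterates $L_0^n L_1^{m}v$ in terms of $\partial^s L_0^n L_1^{m'} L_2^v v$) produces an explicit invertible transition between the two $\Bbbk$-bases $\{L_0^n \partial^s L_1^m v\}\cup\{L_0^n L_1^m L_2^v v\}$ and $\{\partial^s L_0^n v\}\cup\{\partial^s L_0^n L_1^m L_2^v v\}$, the latter of which is manifestly the set of $\partial$-iterates of $x_{n+1}=L_0^n v$ and $y_{n+1,m+1}=L_0^n L_1^m L_2^v v$; so $U(\Vir;v,3)$ is freely generated by these as an $H$-module. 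The main technical obstacle is verifying closure of all compositions introduced by the new module relation: this bookkeeping is essentially identical in flavour to the $K_1$ computation, and as the authors indicate just above, it is most efficiently delegated to a computer algebra system, with an optional cross-check via an explicit realization of $U(\Vir;v,3)$ inside a suitable $\Cend_N$ in the spirit of \eqref{eq:Cend2Q}.
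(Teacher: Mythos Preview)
Your proposal is correct and follows exactly the paper's approach: the paper's own proof simply states that one must adjoin the two module relations $L_2L_2v=0$ and $\partial L_2 v - 2L_1 v = 0$ to obtain a GSB (citing \cite{Kol_ICAC17} for the composition checks), after which the reduced words are precisely \eqref{eq:Vir-3Basis}; you have reconstructed this outline with more detail, including the derivation of both relations and the $H$-freeness argument. Note incidentally that your relation $\partial L_2^v v = 2L_1^v v - 2v$ is the correct one (the paper's version drops the constant term $-2v$, apparently a typo), and be aware that the word shape $L_0^n\partial^sL_1^m v$ in \eqref{eq:Vir-3Basis} corresponds to the order $L_0<\partial<L_1<L_2$ rather than the $K_1$ order $L_0<L_1<\partial<L_2$ you invoke---the two bases differ only by a triangular change, so this does not affect the argument.
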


\begin{proof}
To find a GSB of $U(\Vir; v,3)$ it is enough to add the following to the initial set of defining relations:
\[
 L_2L_2v = 0,\quad \partial L_2v - 2L_1v = 0, 
\]
see \cite{Kol_ICAC17} for details. The set of reduced words coincides with \eqref{eq:Vir-3Basis}.
\end{proof}

\section{Conformal Poisson brackets on associative envelopes of the Virasoro conformal algebra}\label{sec:Brackets}

Let $L$ be a Lie conformal algebra generated by a set $X$ as an $H$-module. For a fixed function $N:X\times X\to \mathbb Z_+$, 
consider the universal associative conformal envelope $U=U(L;X,N)$. The latter is a homomorphic image of $\Conf(X,N)$, 
so there is an ascending filtration 
\[
 U = \bigcup\limits_{n\ge 1}F_nU, \quad (F_nU\ool\lambda F_mU)\subseteq F_{n+m}U[\lambda ], 
\]
where $F_{n}U$ consists of images of all words \eqref{eq:Conf_basis} of degree $k+1\le n$ in $X$.
Assume $F_0U=\{0\}$.

Consider the associated graded linear space 
\[
 \gr U(L;X,N) = \bigoplus _{n\ge 1} F_nU/F_{n-1}U
\]
equipped with well-defined operations
\[
 \partial \bar u = \overline{\partial u},\quad u\in F_nU,
\]
and 
\[
 (\bar u \bol\lambda \bar v) = \overline {(u\ool\lambda v)}\in F_{n+m}U/F_{n+m-1}U, \quad u\in F_nU,\ v\in F_mU.
\]
The associative and commutative conformal algebra obtained is a Poisson conformal algebra relative to 
\begin{equation}\label{eq:UnivPoisson}
  [\bar u\bol\lambda \bar v ] = \overline{(u\ool\lambda v) - \{v\ool\lambda u\}} \in F_{n+m-1}U/F_{n+m-2}U 
\end{equation}
for $u\in F_nU$, $v\in F_mU$.
The operation \eqref{eq:UnivPoisson} is well-defined since 
 \eqref{eq:ConfLeibniz} and \eqref{eq:ConfLeibniz2} imply
\[
 [u\ool\lambda v] = (u\ool\lambda v) - \{v\ool\lambda u\} \in F_{n+m-1}U.
\]
It follows from the same relations that \eqref{eq:ConfAComm}, \eqref{eq:ConfJacobi}, and \eqref{eq:ConfLeibniz} hold
for the operations $[\cdot \bol\lambda \cdot]$ and  $(\cdot \bol\lambda \cdot)$ on $\gr U(L;X,N)$.
Therefore, $\gr U(L;X,N)$ carries a natural structure of a Poisson conformal algebra, let us denote it by $P(L;X,N)$.

Obviously, $\mathbb Z_2$-graded version of this construction leads to a Poisson conformal superalgebra structure 
on the associated graded universal associative conformal envelope of a Lie conformal superalgebra.

\begin{example}\label{exmp:grUVir2}
 For the Virasoro Lie conformal algebra, 
 $P(\Vir; \{v\}, 2)$ is isomorphic to the Poisson conformal algebra $PV_2$
 from Example~\ref{exmp:WeylPoiss}. 
\end{example} 
 
It is easy to find a GSB of $U=U(\Vir; \{v\}, 2)$ (see \cite{BFK2000}), 
  the corresponding set of reduced words 
 is $\partial^s (L_0^v)^n v \in F_{n+1}U$, $n,s\ge 0$. Since $L_1^vv = v$, we have 
 the isomorphism of conformal algebras $\gr U \simeq \Cur \Bbbk[v]$, $(L_0^v)^nv\mapsto v^{n+1}$.
 It is straightforward to evaluate conformal Poisson bracket using \eqref{eq:ConfLeibniz} and \eqref{eq:ConfLeibniz2}
 to get the formula from Example~\ref{exmp:WeylPoiss}.

\begin{example}\label{exmp:grUVir3}
On the 1-generated free commutative conformal algebra one may define 
a Poisson conformal bracket induced by the Virasoro $\lambda $-bracket \eqref{eq:VirProduct}.
Let us denote this Poisson algebra $PV_3$.
\end{example}

Corollary \ref{cor:Vir3Envelope} and \cite[Section 9.3]{BFK2000} show that
$\gr U(\Vir; \{v\},3)$ is isomorphic to 
the 1-generated commutative conformal algebra $\ComConf(\{v\},3)$. 
Let us evaluate, for example, $(x_n\bol\lambda x_m)$. 
By definition, 
\begin{multline}\nonumber
(x_n\bol\lambda x_m) = L_0^{n-1}v \bol\lambda L_0^{m-1}v  \\
= L_0^{n+m-2}(v\bol\lambda v ) = L_0^{n+m-2}(L_0v + \lambda L_1v +\lambda^{(2)}L_2v) \\
= L_0^{n+m-1}v + \frac{1}{2}\lambda\partial L_0^{n+m-2} L_1v + \lambda^{(2)}L_0^{n+m-2}L_2v \\
= x_{n+m} + \frac{1}{2} (\lambda\partial + \lambda^2)y_{n+m-1,1}.
\end{multline}
Similarly,
\[
\begin{gathered}
 (x_n\bol\lambda y_{m,k}) = y_{n+m,k} + \lambda y_{n+m-1,k+1}, \\
 (y_{n,m}\bol\lambda y_{k,l}) = 0, 
\end{gathered} 
\]
Explicit formulas for the Poisson conformal bracket on $PV_3$ may be deduced from 
\eqref{eq:ConfLeibniz} and \eqref{eq:ConfLeibniz2}.
For example, 
\begin{multline}{}\nonumber
[x_1\bol\lambda x_m] = [x_1\bol\lambda L_0x_{m-1}] = ([v\bol\lambda v]\bol\lambda x_{m-1}) + L_0[x_1\bol\lambda x_{m-1}] \\
= \lambda (x_{m} + \frac{1}{2} (\lambda\partial + \lambda^2)y_{m-1,1}) + L_0[x_1\bol\lambda x_{m-1}] \\
= (m-1)\lambda x_m + \frac{m-1}{2} \lambda^2 (\lambda+\partial ) y_{m-1,1} + L_0^{m-1}(\partial +2\lambda )v \\
= (\partial+(m+1)\lambda )x_{m} + \frac{m-1}{2}\lambda^2(\lambda + \partial )y_{m-1,1}.
\end{multline}
In a similar way, 
\[
[x_n\bol\lambda x_m] = (n\partial + (n+m)\lambda )x_{n+m-1} + \dfrac{1}{2}\lambda (\partial + \lambda ) ((n-1)\partial + (n+m-2)\lambda )y_{n+m-2,1}. 
\]
To compute $[x_n\bol\lambda y_{m,k}]$, let us start with $[x_1\bol\lambda y_{1,1}]=[v\bol\lambda L_2v]$ which is equal to 
the coefficient of $[v\bol\lambda (v\bol\mu v)]$ at $\mu^{(2)}$:
\begin{multline}{}\nonumber
 [v\bol\lambda (v\bol\mu v)] = ((\partial +2\lambda)v\bol{\lambda+\mu} v) + (v\bol\mu (\partial+2\lambda )v) \\
 =(\lambda-\mu)(L_0v+(\lambda+\mu)L_1v + (\lambda+\mu)^{(2)}L_2v) + (2\lambda + \partial+\mu)(L_0v + \mu L_1v + \mu^{(2)}L_2v) \\
 = (\lambda + \partial)\mu^{(2)} L_2v + \dots .
\end{multline}
Hence, 
\[
 [x_1\bol\lambda y_{1,1}] = (\lambda + \partial)y_{1,1}.
\]
In a similar way, 
\[
 [x_1\bol\lambda y_{m,1}] = (m\lambda +\partial )y_{m,1} + (m-1)\lambda^2 y_{m-1,2}, \quad m\ge 2.
\]
For $k\ge 2$, we may represent 
$y_{m,k} = L_1y_{m,k-1}$ and compute
\[
 [x_1\bol\lambda y_{m,k}] = - y_{m+1,k-1} + L_1[v\bol\lambda y_{m,k-1}].
\]
Therefore, 
\begin{multline}\nonumber 
[x_1\bol\lambda y_{m,k}] = (1-k)y_{m+1,k-1} + L_1^{k-1}[x_1\bol\lambda y_{m,1}]  \\
= (\partial+\lambda)y_{m,k} + (m-1)\lambda^2 y_{m-1,k+1} - (k-1)y_{m+1,k-1}.
\end{multline}
Finally, 
\begin{multline}\nonumber 
 [x_n\bol\lambda y_{m,k}] = n(1-k)y_{n+m,k-1} + (n(\partial+\lambda) - (n-1)(k-1)\lambda )y_{n+m-1,k} \\
 + \lambda ((n+m-2)\lambda + (n-1)\partial )y_{n+m-2, k+1}
\end{multline}
by induction in $n\ge 1$.
It remains to note that 
\[
 [y_{n,m}\bol\lambda y_{k,l}] = 0. 
\]
Therefore, $PV_3$ is a central extension of $PV_2$ by means of the conformal module spanned by $y_{n,m}$, $n,m\ge 1$.

An interesting example of a Poisson conformal envelope of the Virasoro 
conformal algebra appears from the associative envelope of the Neveu--Schwartz conformal
superalgebra $K_1$.

\begin{example}\label{exmp:grUK_1}
 Suppose $L=K_1$ is the Neveu--Schwartz conformal superalgebra generated by $X=\{v,g\}$. 
 Then $U=U(K_1; X,N)$ for $N$ given by \eqref{eq:K1-Locality} 
 is isomorphic to the conformal subalgebra of $\Cend_2$
 that consists of matrices  \eqref{eq:Cend2Q}.
 Although $P=\gr U$ is not isomorphic to the supercommutative conformal algebra 
 generated by $X$ relative to the locality function $N$, the conformal Lie bracket on $K_1$
 induces Poisson conformal superalgebra structure on $P$ denoted $PK_1$. 
\end{example}

 According to Corollary~\ref{cor:K1_basis}, every $F_nU/F_{n-1}U\subset PK_1$, $n>1$, 
 is a 4-dimensional free $H$-module with a basis
 \[
  \bar a_n = a_n+F_{n-1}U,\quad \bar b_n=b_n+F_{n-1}U,\quad \bar e_n=e_n+F_{n-1}U,\quad \bar f_n=f_n+F_{n-1}U,
 \]
where 
\[
 a_n = \begin{pmatrix}
        x^n & 0 \\ 0 & x^n-\frac{1}{2}\partial x^{n-1}
       \end{pmatrix},
\quad 
b_n = \begin{pmatrix}
       0 & 0 \\ 0 & x^{n-2}
       \end{pmatrix},
\]
\[
e_n = \begin{pmatrix}
       0 & x^n \\ -x^{n-1} & 0
       \end{pmatrix},
\quad 
f_n = \begin{pmatrix}
       0 & 0 \\ x^{n-2} & 0
       \end{pmatrix}.
\]
Here $a_n$ and $b_n$ are even elements of the $\mathbb Z_2$-graded associative conformal algebra $U$, 
$e_n$ and $f_n$ are odd elements.
Let us evaluate explicitly the structure of the even part $PK_{10}$ of the Poisson conformal superalgebra $PK_1$.

By the definition of $\Cend_2$, 
\[
\begin{aligned}{}
 [a_n\ool\lambda a_m] & = a_n(-\lambda, x)a_m(\partial+\lambda, x+\lambda ) - a_m(\lambda+\partial, x)a_n(-\lambda, x-\partial-\lambda) \\
 &=\begin{pmatrix}
    x f(\partial, \lambda , x) & 0 \\
    0 & g(\partial, \lambda , x)
   \end{pmatrix}.
\end{aligned}
\]
To find the component from $F_{n+m-1}U/F_{n+m-2}U$, find the principal (relative to $x$) term of 
\[
 xf(\partial, \lambda , x) = (m\lambda + n(\partial+\lambda))x^{n+m-1} + \dots 
\]
and of 
\[
 g(\partial, \lambda, x) - \left( x - \frac{1}{2}\partial \right)f(\partial,\lambda, x) = \dfrac{\lambda^2+\partial\lambda}{4}((m+n-2)\lambda + (n-1)\partial ) x^{n+m-3}+\dots.
\]
Therefore, 
\begin{equation}\label{eq:PK_1[aa]}
 [\bar a_n\bol\lambda \bar a_m] = (n\partial + (m+n)\lambda )\bar a_{n+m-1} + \dfrac{\lambda^2+\partial\lambda}{4}((m+n-2)\lambda + (n-1)\partial )\bar b_{n+m-1}.
\end{equation}
In a similar way, we may evaluate 
\begin{gather}
[\bar a_n\bol\lambda \bar b_m] = ((n+m-2)\lambda + n\partial ) \bar b_{n+m-1},
                                             \label{eq:PK_1[ab]} \\
[\bar b_n\bol\lambda \bar b_m] = 0, 
                                             \label{eq:PK_1[bb]} \\
(\bar a_n\bol\lambda \bar a_m) = \bar a_{n+m} + \dfrac{\lambda^2+\partial\lambda}{4} \bar b_{n+m},
                                             \label{eq:PK_1aa} \\
(\bar a_n\bol\lambda \bar b_m) = \bar b_{n+m}, 
                                             \label{eq:PK_1ab} \\            
(\bar b_n\bol\lambda \bar b_m) = 0. 
                                             \label{eq:PK_1bb} 
\end{gather}
Therefore, $PK_{10}$ as an $H$-module is generated by $\bar a_n$, $\bar b_m$, $n\ge 1$, $m\ge 2$, 
and the multiplication table is given by \eqref{eq:PK_1[aa]}--\eqref{eq:PK_1bb}. 
It is easy to see that $PK_{10}$ is a central extension of $PV_2$ via the submodule generated by $b_m$, $m\ge 2$. 
The extension is not split since the 1st component of the grading does not intersect with 
the $H$-submodule spanned by $\bar b_n$, $n\ge 2$. To simplify the multiplication table, let us introduce 
\[
 \hat a_1 = \bar a_1,\quad \hat a_n = \bar a_n -\frac{1}{8} \partial^2\bar b_n, \ n\ge 2.
\]
Then 
\[
\begin{gathered}
 (\hat a_1\bol\lambda \hat a_m) = \hat a_{m+1}+\frac{1}{8}\lambda^2\bar b_{m+1},\quad  m> 1,\\
 (\hat a_n\bol\lambda \hat a_m) = \hat a_{n+m},\quad n,m>1, \\
 [\hat a_1\bol\lambda \hat a_m] = (\partial + (m+1)\lambda )\hat a_m + \frac{1}{8}((m-1)\lambda^3 -\partial\lambda^2-\partial^2\lambda -\partial^3)\bar b_m, \quad m>1,\\
 [\hat a_n\bol\lambda \hat a_m] = (n\partial + (n+m)\lambda )\hat a_{n+m-1} - \frac{n}{8} (4\lambda\partial ^2+\lambda^2\partial )\bar b_{n+m-1}, 
\quad n,m>1. 
\end{gathered}
\]

\begin{remark}
 For $N>3$, the associated graded Poisson conformal algebra 
 $PV_N = \gr U(\Vir; \{v\}, N)$ would not be a null extension of $PV_2$. 
 However, it is easy to see that $PV_{N+1}$ is a null extension of $PV_N$. 
 It is interesting problem to find the corresponding conformal modules
 and cocycles. This problem is closely related with finding a linear basis 
 of the free commutative conformal algebra.
\end{remark}

\end{document}